\newtheorem{lemma}{Lemma}[section]
\newtheorem{theorem}[lemma]{Theorem}
\theoremstyle{definition}
\numberwithin{equation}{section}
\numberwithin{figure}{section}
\begin{document}
%%%%%%%%%%%%%%%%%%%%%%%%%%%%%%%%%%%%%%%%%%%%%%%%%%%%%%%%%%%%%%%%%%%%%%%%
\title{\huge On the factorization of linear combinations of polynomials}         % Enter your title between curly braces
\author{Anna Gharibyan}        % Enter your name between curly braces
\date{}          % Enter your date or \today between curly braces

\maketitle

%%%%%%%%%%%%%%%%%%%%%%%%%%%%%%%%%%%%%%%%%%%%%%%%%%%%%%%%%%%%%%%%%%%%%%%%
\begin{abstract}
In this paper we consider linear combinations of two trivariate homogeneous polynomials of second degree. We formulate and solve two problems:

i) Characterization of polynomials for which all linear combinations are factorizable.

ii) How many linear factorizable combinations are required for all linear combinations to be factorizable.

Next, the solutions of analog problems for bivariate polynomials of second degree are derived.

\end{abstract}
%	\title{On factorization of bivariate polynomials}\bf
%	\date{\vspace{-5ex}}
%	\maketitle
%	
%	\newpage
%	\section{Introduction}
%	
%	\newpage

\section{Introduction}

Among the properties of multivariate polynomials the factorizability and irreducibility are very important (see \cite{Ga} -\cite{Wa}).

The factorizability reduces the study of a polynomial into the study of two or more polynomials of smaller degrees.

While many properties are true only for irreducible polynomials. For example, let $p$ and $q$
be bivariate polynomials of degree $m$ and $n,$ respectively  where $m>n,$ and the polynomial $p$ be irreducible. Then, in view of the Bezout theorem, the polynomial system $p(x,y)=0, q(x,y)=0,$ has atmost $mn$ solutions.

In this paper we consider linear combinations of two trivariate homogeneous polynomials of degree $2$.
We study the following problem: Is there a linear combination which is irreducible, or alternatively, are all linear combinations factorizable? We bring a simple necessary and sufficient condition for the latter property.

We also consider the following problem: How many linear factorizable combinations are required for all linear combinations to be factorizable. We determine the exact number of such linear combinations - $4.$ We bring also counterexample in which case only $3$ linear combinations are factorizable.

Next, we establish the analogs of the mentioned results for bivarite polynomials of degree two.

Let us use the following notation for the space of trivariate homogeneous polynomials:
$$\dot\Pi_n^3=\{p(x,y,z)= \sum_{i+j+k=n}a_{ijk}x^iy^jz^{k}\},$$
and the space of bivariate polynomials:
$$\Pi_n^2=\{p(x,y)= \sum_{i+j\le n}a_{ij}x^iy^j\},$$
with complex coefficients.

We use the following well-known concept of the associate polynomial (see section 10.2, \cite{Wal}) .
Let $p\in \Pi_n^2,$ i.e., $p(x,y)=\sum_{i+j\le n}a_{ij}x^iy^j.$ Then the following trivariate homogeneous polynomial is called associated with $p:$
$$\bar p(x,y,z)= \sum_{i+j+k=n}a_{ij}x^iy^jz^{k}.$$
Evidently we have that
\begin{equation}\label{aaa}\bar p(x,y,z)=z^np\left(\frac{x}{z},\frac{y}{z}\right),\ \hbox{for}\ z\neq 0. \end{equation}
Also we have that $$p(x,y)=\bar p(x,y,1).$$
It is easily seen that a polynomial $p\in\Pi_n^2$ is factorizable if and only if the associated polynomial $\bar p$ is factorizable. Moreover, we have that
$$p=p_1p_2 \Leftrightarrow \bar p=\bar p_1\bar p_2.$$

To simplify notation, we shall use the same letter $\ell$, say, to denote the bivariate polynomial of degree $1$ and the line described by the equation $\ell(x,y)=0,$ or the trivariate homogeneous polynomial of degree $1$ and the line in the projective space described by the equation $\ell(x,y,z)=0.$

The following result follows from the fundamental theorem of algebra (see Theorem 10.8, \cite{Wal}).
\begin{theorem}\label{hom_n} Let $p$ be a bivariate homogeneous polynomial of degree $n:\ p\in\dot\Pi_n^2.$ Then $p$ can be factorized into linear polynomials:
$$p=\prod_{i=1}^n\ell_i,$$
where $\ell_i\in\dot\Pi_1^2.$
\end{theorem}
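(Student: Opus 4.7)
The plan is to reduce the statement to the univariate fundamental theorem of algebra by dehomogenizing, with the only subtlety being that one needs to first strip off any power of $y$ that divides $p$ in order to guarantee that the dehomogenization has full degree.

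First, I would write $p(x,y)=\sum_{i+j=n}a_{ij}x^iy^j$ and let $k\ge 0$ be the largest integer such that $y^k$ divides $p$; this gives a factorization $p(x,y)=y^k\tilde p(x,y)$, where $\tilde p\in\dot\Pi_{n-k}^2$ and $y\nmid\tilde p$. The condition $y\nmid\tilde p$ is equivalent to saying that the coefficient of $x^{n-k}$ in $\tilde p$ is nonzero, i.e.\ $\tilde p(x,0)=cx^{n-k}$ for some $c\ne 0$.

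Next, I would pass to the dehomogenization $q(t):=\tilde p(t,1)$. By the choice of $k$, this is a univariate polynomial of degree exactly $n-k$ with leading coefficient $c$, so by the fundamental theorem of algebra
$$q(t)=c\prod_{i=1}^{n-k}(t-\alpha_i)$$
for some $\alpha_1,\dots,\alpha_{n-k}\in\mathbb C$. Homogeneity of $\tilde p$ gives $\tilde p(x,y)=y^{n-k}\tilde p(x/y,1)=y^{n-k}q(x/y)$ for $y\ne 0$, and distributing the $y^{n-k}$ inside the product yields the identity of polynomials
$$\tilde p(x,y)=c\prod_{i=1}^{n-k}(x-\alpha_i y),$$
which then extends to all $(x,y)$ by polynomial identity. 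Multiplying back by $y^k$ gives the required factorization
$$p(x,y)=c\,y^k\prod_{i=1}^{n-k}(x-\alpha_i y),$$
a product of $n$ linear factors in $\dot\Pi_1^2$ (absorbing the constant $c$ into any one of them).

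There is no real obstacle here; the only point to take care of is the case $k>0$, when the naive dehomogenization $p(t,1)$ has degree strictly less than $n$ and one would lose factors. Stripping off the maximal power of $y$ at the start handles this cleanly, and by symmetry one could equally strip off a power of $x$ — the two choices give the same set of linear factors up to scaling.
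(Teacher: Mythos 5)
Your proof is correct and is precisely the standard argument the paper alludes to when it states that the result ``follows from the fundamental theorem of algebra'' and cites Walker; the paper itself supplies no proof beyond that citation. Your care in stripping off the maximal power of $y$ before dehomogenizing is exactly the point needed to make that reduction to the univariate case complete.
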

	\section{The results for homogeneous polynomials}
\begin{theorem} \label{tm:1}
	Let $p(x, y, z)$ and $q(x, y, z)$ be linearly independent second degree homogeneous polynomials. Then the polynomial $r(x, y, z) = \alpha p(x, y, z)$ + $\beta q(x, y, z)$ is a product of polynomials of first degree for all $\alpha$, $\beta$ $\in \mathbb C$ if and only if at least one of the following conditions takes place:
	\begin{description}
		\item[$a)$] $p$ and $q$ have a common factor of first degree:
$$p=\ell \ell_1,\ q=\ell \ell_2,\ \hbox{where}\ \ell,\ell_1,\ell_2\in \dot\Pi_1^3$$
		\item[$b)$] $p(x, y, z) = f(s, t), \ q(x, y, z) = g(s, t),$	
		 where $f$ and $g$ are bivariate homogeneous polynomials of degree two: $f,g\in\dot\Pi_2^2$ and $s,t\in\dot\Pi_1^3:$\\
		\begin{equation} \label{st} s = a_1x + b_1y + c_1z,\
		 t = a_2x + b_2y + c_2z.
\end{equation}
	\end{description}
\end{theorem}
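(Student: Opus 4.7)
\emph{Sufficiency.} If $p = \ell\ell_1$ and $q = \ell\ell_2$ then $r = \ell(\alpha\ell_1 + \beta\ell_2)$ is already factored. If $p = f(s,t)$ and $q = g(s,t)$ with $f,g \in \dot\Pi_2^2$, then $r = (\alpha f + \beta g)(s,t)$, and $\alpha f + \beta g$ is a homogeneous bivariate quadratic which by Theorem~\ref{hom_n} factors as a product of two linear forms in $s, t$, each of which is itself linear in $x, y, z$.

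\emph{Necessity.} I identify each $u \in \dot\Pi_2^3$ with its symmetric matrix $M_u \in \mathbb{C}^{3 \times 3}$ and use the classical fact that $u$ factors into two linear forms iff $\det M_u = 0$. Setting $A = M_p$ and $B = M_q$, the hypothesis becomes the polynomial identity $\det(\alpha A + \beta B) \equiv 0$ in $\alpha, \beta$. Specializing at $(1,0)$ and $(0,1)$ gives $\det A = \det B = 0$, so $p = \ell_1\ell_2$ and $q = m_1 m_2$ for some linear forms. The rest of the proof is a case analysis on the ranks of $A$ and $B$, using coordinate changes on $\mathbb{C}^3$ (which preserve both $(a)$ and $(b)$) to simplify the computation.

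If $\mathrm{rank}\, A = 1$ then $p = \ell^2$; take coordinates with $\ell = x$. A direct expansion, using $\det B = 0$, yields $\det(\alpha A + \beta B) = \alpha\beta^2(b_{22}b_{33} - b_{23}^2)$, forcing $b_{22}b_{33} - b_{23}^2 = 0$; equivalently, $q(0,y,z)$ is a perfect square in $y, z$. Since $q = m_1 m_2$, the restrictions $m_1(0,y,z)$ and $m_2(0,y,z)$ must be proportional. If one of them vanishes then $\ell = x$ divides the corresponding $m_i$ and is a common factor of $p$ and $q$, giving case~$(a)$; otherwise $m_1 \in \mathrm{span}(x, m_2)$, so $p = x^2$ and $q = m_1 m_2$ both lie in $\mathbb{C}[x, m_2]$, giving case~$(b)$ with $s = x$, $t = m_2$.

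If $\mathrm{rank}\, A = \mathrm{rank}\, B = 2$ and $p, q$ share no common linear factor (otherwise $(a)$ is immediate), choose coordinates with $\ell_1 = x, \ell_2 = y$, so $p = xy$, and write $m_i = a_i x + b_i y + c_i z$; the hypothesis says neither $m_i$ is proportional to $x$ or $y$. Expanding $\det(\alpha A + \beta B)$ as a cubic in $\alpha, \beta$, the coefficients of $\alpha^2\beta$ and $\alpha\beta^2$ are, up to nonzero constants,
\begin{equation*}
 c_1 c_2 \qquad \text{and} \qquad (a_1 c_2 - a_2 c_1)(b_1 c_2 - b_2 c_1),
\end{equation*}
and both must vanish. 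The first forces $c_1 c_2 = 0$; if both $c_i$ vanish then $m_1, m_2 \in \mathrm{span}(x, y)$ and case~$(b)$ holds with $s = x, t = y$. Otherwise, WLOG $c_1 = 0$ and $c_2 \ne 0$, and the second relation degenerates to $a_1 b_1 c_2^2 = 0$, forcing $a_1 = 0$ or $b_1 = 0$; either would make $m_1$ proportional to $y$ or to $x$, contradicting the no-common-factor hypothesis. The chief obstacle is this final case: carrying out the determinant expansion cleanly enough to read off the two coefficient identities, and then ruling out the spurious mixed branch via the no-common-factor assumption.
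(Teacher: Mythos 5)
Your argument is correct, but it follows a genuinely different route from the paper's. The paper proves necessity by passing to the stronger Theorem~\ref{tm:2}: it writes $p=\ell_1\ell_2$, $q=\ell_3\ell_4$, reduces the claim to showing the four lines are concurrent ($\mathrm{rank}\{\ell_1,\ldots,\ell_4\}=2$), and, assuming rank $3$, changes coordinates so that $\ell_1,\ell_2,\ell_3$ become $\bar x,\bar y,\bar z$ and then runs a hands-on coefficient comparison on $\alpha_3\bar x\bar y+\beta_3\bar z(A_4\bar x+B_4\bar y+C_4\bar z)=(\cdots)(\cdots)$, using the fourth factorizable combination to close the last subcase. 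You instead invoke the classical discriminant criterion (a nonzero ternary quadratic splits iff its symmetric matrix is singular), so the hypothesis becomes the identical vanishing of the binary cubic $\det(\alpha A+\beta B)$, and the conclusion drops out of the $\alpha\beta^2$ and $\alpha^2\beta$ coefficients after normalizing $p$ to $x^2$ or $xy$. I checked your two coefficient formulas: with $p=x^2$ one gets $\det(\alpha A+\beta B)=\beta^3\det B+\alpha\beta^2(b_{22}b_{33}-b_{23}^2)$, and with $p=xy$, $q=m_1m_2$, the $\alpha^2\beta$ and $\alpha\beta^2$ coefficients are $-\tfrac14 c_1c_2$ and $\tfrac14(a_1c_2-a_2c_1)(b_1c_2-b_2c_1)$; both case analyses then close as you describe, and the two cases (some rank equal to $1$, versus both ranks equal to $2$) exhaust all possibilities since $\det A=\det B=0$ and $A,B\neq0$. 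What your approach buys is conceptual economy and, at no extra cost, the quantitative refinement that is the paper's Theorem~\ref{tm:2}: a nonzero binary cubic form has at most three zeros in $\mathbb{P}^1$, so four pairwise non-proportional factorizable combinations already force $\det(\alpha A+\beta B)\equiv0$, whereas the paper must thread the four-combination hypothesis through its entire case analysis (and its counterexample in Subsection~\ref{ce} shows three combinations are not enough, matching the degree-three bound exactly). What the paper's approach buys is self-containedness: it never appeals to the matrix/discriminant characterization of degenerate conics, only to explicit factorizations and coefficient matching.
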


	Let us mention that the statement in ``if" direction of Theorem \ref{tm:1} is evident. Indeed, if a) takes place, then
	$$\alpha p+\beta q = \ell(\alpha \ell_{1}+\beta \ell_2).$$
			
	If b) takes place then $f,g\in\dot\Pi_2^2$ and therefore $r=\alpha p+\beta q=\alpha f+\beta g\in\dot\Pi_2^2$ is a bivariate homogeneous polynomial of second degree of $s$ and $t$. Therefore, in view of Theorem \ref{hom_n}, $r(x, y, z)$ can be factorized in the variables of $s$ and $t$ for all $\alpha, \beta\in\mathbb C.$
Then by using \eqref{st} and passing to the variables $x,y,z,$ we get the desired factorization. \newline
	Now let us prove the necessity of the conditions a) and b). It is enough to prove the following theorem.\\
\begin{theorem} \label{tm:2}	
	Let $p(x, y, z)$ and $q(x, y, z)$ be linearly independent homogeneous polynomials of degree two. Suppose there exist four noncollinear vectors $(\alpha_i, \beta_i) \in \mathbb C^2,\ i = 1,\dots, 4,$  such that the polynomials
	$r_i(x, y, z) = \alpha_i p(x, y, z) + \beta_i q(x, y, z)$
	can be factorized.

Then we have that at least one of the conditions a) and b) of Theorem \ref{tm:1} takes place and therefore, for all $\alpha, \beta\in \mathbb C,$ the polynomilal
	$$r(x, y, z) = \alpha p(x, y, z) + \beta q(x, y, z)$$
	is factorizable.\\
\end{theorem}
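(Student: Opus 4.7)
The strategy is to reformulate factorizability in determinantal terms. Associate to $p$ and $q$ their symmetric $3\times 3$ coefficient matrices $P$ and $Q$, so that $\alpha p+\beta q$ has matrix $\alpha P+\beta Q$. A standard fact is that a ternary quadratic form is a product of two linear forms if and only if its matrix is singular. Consider the discriminant
$$\Delta(\alpha,\beta):=\det(\alpha P+\beta Q),$$
which is a homogeneous polynomial of degree three in $(\alpha,\beta)$. The noncollinearity hypothesis means that the four pairs $(\alpha_i,\beta_i)$ represent four distinct points of $\mathbb{P}^1$ at which $\Delta$ vanishes. Since a non-zero binary cubic has at most three projective roots, $\Delta\equiv 0$. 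This already establishes that every $\alpha p+\beta q$ factors, and reduces Theorem~\ref{tm:2} to showing: whenever $\Delta\equiv 0$, condition (a) or (b) of Theorem~\ref{tm:1} holds.

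Since $\det P=\Delta(1,0)=0$, $p$ itself factors as $p=\ell_1\ell_2$ with $\ell_1,\ell_2\in\dot\Pi_1^3$. After a linear change of coordinates, I will split into cases by the relative position of $\ell_1$ and $\ell_2$.

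\textbf{Case A:} $\ell_1,\ell_2$ linearly independent. Normalize to $p=xy$, and write $q=Ax^2+Bxy+Cy^2+Dxz+Eyz+Fz^2$. Expanding $\Delta(\alpha,\beta)$ and equating all its coefficients (as a polynomial in $\alpha,\beta$) to zero forces, in order, $F=0$ (from the $\alpha^2\beta$-coefficient), $DE=0$ (from $\alpha\beta^2$), and then either $D=E=0$ (so $q=Ax^2+Bxy+Cy^2$ is a polynomial in $s=x$, $t=y$, and~(b) holds) or one of $x,y$ divides $q$ (and~(a) holds).

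\textbf{Case B:} $p=\ell_1^2$. Normalize to $p=x^2$. An analogous expansion yields the two constraints $CF=E^2$ and $(BF-DE)^2=0$. If $F\neq 0$, completion of the square in $z$ rewrites $q$ in the form $c_1 x^2+c_2 w^2$ for a linear form $w$ involving $z$, giving~(b) with $s=x$, $t=w$; if $F=0$ the constraints collapse and a short subcase analysis (on whether $D$ or $E$ vanishes) produces either~(b) with $s=x,t=y$ or the divisibility $x\mid q$ required by~(a). The only real obstacle here is bookkeeping: one must verify that the linear form $w$ produced by completion of squares is not a scalar multiple of $x$ (otherwise $q\propto p$, contradicting the linear independence of $p$ and $q$). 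Granting this, the sufficiency direction already verified in the discussion following Theorem~\ref{tm:1} finishes the proof.
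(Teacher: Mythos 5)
Your proposal is correct, but it takes a genuinely different route from the paper. The paper first performs a change of basis in the pencil (its Step 2) to reduce to the case where $p$ and $q$ themselves factor, and then (Step 1) changes coordinates so that three of the four factor lines become $\bar x,\bar y,\bar z$ and carries out a lengthy case analysis on the coefficients of a putative factorization of $r_3$, invoking $r_4$ only at the end to contradict noncollinearity. You instead encode reducibility by the vanishing of $\det(\alpha P+\beta Q)$, where $P,Q$ are the symmetric coefficient matrices (valid over $\mathbb C$: a ternary quadratic form splits into linear factors iff its matrix has rank $\le 2$), observe that this discriminant is a binary cubic in $(\alpha,\beta)$ with four distinct projective zeros, hence identically zero, and then classify the pencils with $\Delta\equiv 0$ by normalizing $p$ to $xy$ or $x^2$. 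Your route is shorter and more conceptual: it makes the threshold ``four'' transparent (a nonzero cubic has at most three roots, which is exactly what the paper's counterexample in Subsection 2.1 realizes), it delivers the ``all combinations factor'' conclusion before any case analysis, and it also subsumes Theorem 2.4, since there the four points $(1,0),(0,1),(\alpha_1,\beta_1),(\alpha_2,\beta_2)$ are again pairwise noncollinear. The paper's argument is more elementary in that it never appeals to the rank criterion for conics. Two small points to tidy in a write-up: in Case A, the subcases $D=0\ne E$ and $E=0\ne D$ need the $\beta^3$-coefficient (i.e.\ $\det Q=0$, which with $F=0$ and $DE=0$ reads $AE^2+CD^2=0$) to force $A=0$ resp.\ $C=0$ and hence the common factor; and the displayed identities $CF=E^2$ and $(BF-DE)^2=0$ in Case B hold only up to factors of $2$ depending on whether $E$ denotes the coefficient of $yz$ or the matrix entry $E/2$ --- the correct invariant statements are that the $(y,z)$-minor of $Q$ vanishes and that $B^2F-BDE+CD^2=0$. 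Neither affects the argument.
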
	
\begin{proof} The proof consists of two parts.

{\bf Step 1.}
	Let us first consider the case $(\alpha_1, \beta_1)=(1,0)\quad (\alpha_2, \beta_2)=(0,1).$ Then we have that  $(\alpha_3, \beta_3)$ and $(\alpha_4, \beta_4)$ are not collinear and
	\begin{equation} \label{nez}\alpha_3 \beta_3 \alpha_4 \beta_4\neq 0.
\end{equation}
Now let us prove that at least one of the conditions a) and b) of Theorem \ref{tm:1} takes place.

To this end let us assume that the condition a) does not hold, i.e., $p$ and $q$ have no common factor. Then we shall prove that the condition b) holds.

We have that $p$ and $q$ can be factorized:
	\begin{equation}\label{pqell} p(x, y, z) =\ell_1 \ell_2,\quad q(x, y, z) = \ell_3 \ell_4,\end{equation}
where the line $\ell_i$ is given by $A_i x + B_i y + C_iz=0.$

	It suffices to prove that the lines $\ell_1, \ell_2,\ell_3, \ell_4$ are concurrent, i.e., \\ $rank\{\ell_1, \ell_2,\ell_3, \ell_4\} = 2$.
Indeed, in this case, by setting, say $s= A_1 x + B_1 y + C_1z=:\ell_1$ and $t= A_3 x + B_3 y + C_3z=:\ell_3$ we get for $\ell_2$ and $\ell_4:$  $\ A_2 x + B_2 y + C_2z=\alpha_2 t+\beta_2s$ and $\ A_4 x + B_4 y + C_4z=\alpha_4 t+\beta_4s.$
Hence, in view of \eqref{pqell}, we get $p=t(\alpha_2 t+\beta_2s)$ and $q=s(\alpha_4 t+\beta_4s).$ Thus the condition b) of Theorem \ref{tm:1} holds.

	Suppose by way of contradiction  that $rank\{\ell_1, \ell_2,\ell_3, \ell_4\} = 3$. It means that there are 3 lines which are not concurrent. Without loss of generality assume that these lines are $\ell_1, \ell_2,\ell_3.$ Let us set:
	$$ \ell_1 = A_1 x + B_1 y + C_1z =: \bar{x}, $$
	$$ \ell_2 = A_2 x + B_2 y + C_2z =: \bar{y}, $$
	$$ \ell_3 = A_3 x + B_3 y + C_3z =: \bar{z}. $$
Then we have that
$$ x = A_1' \bar x +  B_1'\bar y + C_1'\bar z, $$
	\begin{equation}\label{ABC} y = A_2'\bar x + B_2'\bar y + C_2'\bar z, \end{equation}
	$$ z = A_3'\bar x + B_3'\bar y + C_3'\bar z. $$

	We have that $r_3(x, y, z)=\alpha_3\ell_1\ell_2 + \beta_3\ell_3\ell_4$ can be factorized. Therefore, in view of \eqref{ABC}, there are numbers $A_5, B_5, C_5, A_6, B_6, C_6\in \mathbb C,$ depending on  $\alpha_3$ and $\beta_3$ such that
	\begin{equation} \label{eq:1}
		\alpha_3\bar{x}\bar{y} + \beta_3\bar{z}(A_4\bar{x} + B_4\bar{y} + C_4\bar{z}) = (A_5\bar{x} + B_5\bar{y} + C_5\bar{z})(A_6\bar{x} + B_6\bar{y} + C_6\bar{z}).
	\end{equation}
	
	In the left part of $\eqref{eq:1}$ the coefficients of $\bar{x}^2$ and $\bar{y}^2$ equal to 0, therefore
	$$A_5 A_6 = 0,$$
	$$B_5 B_6 = 0.$$
	Without loss of generality we can discuss only two cases: $A_5 = 0, B_5 = 0$ or $A_5 = 0, B_6 = 0.$

		First consider the case $A_5 = B_5 = 0.$ From \eqref{ABC} we obtain
		\begin{equation*}
		\alpha_3\bar{x}\bar{y} + \beta_3\bar{z}(A_4\bar{x} + B_4\bar{y} + C_4\bar{z}) = C_5\bar{z}(A_6\bar{x} + B_6\bar{y} + C_6\bar{z}).
		\end{equation*}
		Here the coefficient of $\bar{x}\bar{y}$ in the right hand side equals to 0. Therefore $\alpha_3 = 0,$ which contradicts \eqref{nez}.
		
		Now consider the case $A_5 = B_6 = 0.$ From $\eqref{eq:1}$ we get:
		\begin{equation} \label{eq:2}
			\alpha_{3}\bar{x}\bar{y} + \beta_{3}\bar{z}(A_4\bar{x} + B_4\bar{y} + C_4\bar{z}) = (B_5\bar{y} + C_5\bar{z})(A_6\bar{x} + C_6\bar{z}).
		\end{equation}
		
First, let us verify that $A_4 \neq 0.$ Assume by way of contradiction that $A_4=0.$ By comparing the coefficients of $\bar x\bar z$ in both sides of \eqref{eq:2} we get $A_6C_5=0.$ If $A_6 = 0,$ then the coefficient of $\bar x\bar y$ in the left hand side of \eqref{eq:2} equals to $0,$ i.e., $\alpha_3 = 0,$ which contradicts \eqref{nez}. If $C_5 = 0$, then the coefficient of $\bar z^2$ in the left hand side of \eqref{eq:2} equals to $0,$ i.e.,  $\beta_3 C_4 = 0.$ In view of \eqref{nez}, $\beta_3\not = 0$, hence $C_4=0.$ But in this case $\bar{y}$ becomes a common factor of $p$ and $q,$ which contradicts our assumption.

Now let us set:
$$\beta_3' = \beta_3 A_4,\quad B_4' = \frac{B_4}{A_4},\quad C_4' = \frac{C_4}{A_4}.$$
Then we get from \eqref{eq:2}:
\begin{equation} \label{eq:3}	\alpha_3\bar{x}\bar{y} + \beta_3'\bar{z}(\bar{x} + B_4'\bar{y} + C_4'\bar{z}) = (B_5\bar{y} + C_5\bar{z})(A_6\bar{x} + C_6\bar{z}).
\end{equation}
			
				Next, let us verify that $B_4'C_4' = 0.$  Assume by way of contradiction that $B_4'C_4' \not = 0.$ Then by compairing the coefficients in both sides of \eqref{eq:3} we obtain:
				\begin{equation} \label{eq:6}
				      \alpha_3 = A_6B_5,\quad
					\beta_3' = A_6C_5,\quad
					\beta_3' B_4' = B_5C_6,\quad
					\beta_3' C_4' = C_5C_6.
								\end{equation}
				Notice that $A_6C_5\neq 0,$ since, in view of \eqref{nez}, $\beta_3\neq 0,$ and hence $\beta_3'\neq 0.$  Then, notice that $C_6\neq 0.$ Indeed if $C_6=0$ then, in view of \eqref{eq:6}, we obtain that $B_4'=C_4'=0.$ But then $p$ and $q$ have common factor $\bar x,$ which is a contradiction.

Now, from first two equations of \eqref{eq:6} we get that
				$$\frac{B_5}{C_5} = \frac{\alpha_3}{\beta_3'}.$$	
				From last two equations of \eqref{eq:6} we get:
				$$\frac{B_5}{C_5} = \frac{B_4'}{C_4'}.$$	
				Therefore we have $$\frac{\alpha_3}{\beta_3'} = \frac{B_4'}{C_4'}.$$
				And from this we conclude that
				$$\frac{\alpha_3}{\beta_3} = \frac{A_4B_4}{C_4}.$$
				By the same way, by considering the linear combination $r_4(x,y,z)$ instead of $r_3(x,y,z)$ we will get:
				$$\frac{\alpha_4}{\beta_4} = \frac{A_4B_4}{C_4}.$$
				From the last two equalities we get that $(\alpha_3, \beta_3)$ and $(\alpha_4, \beta_4)$ are collinear, which contradicts the hypothesis.
				
		Thus we have that $B_4'C_4' = 0.$		Now, if $B_4' = 0,$ then we get from \eqref{eq:3}:
				\begin{equation} \label{eq:4}
				\alpha_3\bar{x}\bar{y} + \beta_3'\bar{z}(\bar{x} + C_4'\bar{z}) = (B_5\bar{y} + C_5\bar{z})(A_6\bar{x} + C_6\bar{z}).
				\end{equation}
				By comparing the coefficient of $\bar{y}\bar{z}$ in the left and right hand sides of  $\eqref{eq:4}$, we get that $B_5 C_6 = 0$. Now if $B_5 = 0,$ then the coefficient of $\bar x\bar y$ in the left hand side of \eqref{eq:4} equals to $0,$ i.e., $\alpha_3 = 0$, which contradicts \eqref{nez}. Then, if $C_6 = 0,$ the coefficient of $\bar z^2$ in the left hand side of \eqref{eq:4} equals to $0,$ i.e., $\beta_3' C_4' = 0.$ Since $\beta_3' \not = 0$, we conclude that $C_4'=0.$ But in this case $\bar{y}$ becomes a common factor of $p$ and $q,$ which is a contradiction.

			Next, if $C_4' = 0,$ we get from \eqref{eq:3}:	
				\begin{equation} \label{eq:5}
				\alpha_3\bar{x}\bar{y} + \beta_3'\bar{z}(\bar{x} + B_4'\bar{y}) = (B_5\bar{y} + C_5\bar{z})(A_6\bar{x} + C_6\bar{z}).
				\end{equation}
				By comparing the coefficient of $\bar{z}^2$ in the left and right hand sides of  $\eqref{eq:5}$, we get that $C_5 C_6 = 0$. Now, if $C_5 = 0$, then the coefficient of $\bar x\bar z$ in the left hand side of \eqref{eq:5} equals to $0,$ i.e., $\beta_3' = 0,$ which is a contradiction. If $C_6 = 0$, then the coefficient of $\bar y\bar z$ in the left hand side of \eqref{eq:5} equals to $0,$ i.e., $\beta_3' B_4' = 0$, therefore $B_4' = 0.$ But in this case $\bar{y}$ becomes a common factor of $p$ and $q,$ which is a contradiction.

{\bf Step 2.} Now let us turn to the general case.

Consider the polynomials
\begin{equation} \label{eq:9}
	\begin{aligned}
		\tilde{p} &= \alpha_1 p + \beta_1 q, \\
		\tilde{q} &= \alpha_2 p + \beta_2 q.
	\end{aligned}	
	\end{equation}
	We have that $\tilde{p}$ and $\tilde{q}$ are factorizable.
	Since $(\alpha_1, \beta_1)$ and $(\alpha_2, \beta_2)$ are noncollinear we obtain:
	\begin{equation} \label{eq:10}
	\begin{aligned}
		p = \alpha_1'\tilde{p} + \beta_1' \tilde{q},\\
		q = \alpha_2'\tilde{p} + \beta_2' \tilde{q},
	\end{aligned}
	\end{equation}
	where
	\begin{equation} \label{eq:11}
		\begin{pmatrix}
			\alpha_1 & \beta_1 \\
			\alpha_2 & \beta_2
		\end{pmatrix}
		\begin{pmatrix}
			\alpha_1' & \beta_1' \\
			\alpha_2' & \beta_2'
		\end{pmatrix}
		=
		\begin{pmatrix}
			1 & 0 \\
			0 & 1
		\end{pmatrix}.
	\end{equation}
Hence, we get that $(\alpha_1', \beta_1')$ and $(\alpha_2', \beta_2')$ are noncollinear.

	Then we have that the following polynomials
	$$r_3(x, y, z) = \alpha_3 p(x, y, z) + \beta_3 q(x, y, z),$$
	$$r_4(x, y, z) = \alpha_4 p(x, y, z) + \beta_4 q(x, y, z),$$
	can be factorized. By using the relations \eqref{eq:10} we get
	\begin{equation}\label{r3} r_3 = \alpha_3 (\alpha_1' \tilde p + \beta_1' \tilde q) + \beta_3 (\alpha_2' \tilde p + \beta_2' \tilde q) = \tilde{\alpha_3} \tilde p + \tilde{\beta_3} \tilde q,\end{equation}
	$$r_4 = \alpha_4 (\alpha_1' \tilde p + \beta_1' \tilde q) + \beta_4 (\alpha_2' \tilde p + \beta_2' \tilde q) = \tilde{\alpha_4} \tilde p + \tilde{\beta_4} \tilde q,$$
	where
	\begin{equation}  \label{matrix}
		\begin{pmatrix}
			\tilde\alpha_3 & \tilde\beta_3 \\
			\tilde\alpha_4 & \tilde\beta_4
		\end{pmatrix}
		=
		\begin{pmatrix}
		\alpha_3 & \beta_3 \\
		\alpha_4 & \beta_4
		\end{pmatrix}
		\begin{pmatrix}
		\alpha_1' & \beta_1' \\
		\alpha_2' & \beta_2'
		\end{pmatrix}.	
	\end{equation}
Since the vectors $({\alpha_3}, {\beta_3})$ and $({\alpha_4}, {\beta_4})$ as well as $(\alpha_1', \beta_1')$ and $(\alpha_2', \beta_2')$ are noncollinear, we get from \eqref{matrix} that the vectors  $(\tilde{\alpha_3}, \tilde{\beta_3}), (\tilde{\alpha_4}, \tilde{\beta_4})$ are noncollinear, too.

Now we are in the following position:
	
\noindent The polynomials $ \tilde p(x, y, z)$ and $ \tilde q(x, y, z)$ are factorizable themselves and  there exist two non-collinear vectors $(\tilde{\alpha_3}, \tilde{\beta_3}), (\tilde{\alpha_4}, \tilde{\beta_4})$  such that the polynomials
	$$r_3(x, y, z) = \tilde{\alpha_3} \tilde p(x, y, z) + \tilde{\beta_3} \tilde q(x, y, z),$$
	$$r_4(x, y, z) = \tilde{\alpha_4} \tilde p(x, y, z) + \tilde{\beta_4} \tilde q(x, y, z),$$
	 can be factorized.

	To be in the position of Step 1 it remains to show that
	$$\tilde{\alpha_3} \tilde{\beta_3} \tilde{\alpha_4} \tilde{\beta_4} \not = 0.
$$
Suppose by way of contradiction that, for example $\tilde{\alpha_3} = 0$. According to \eqref{r3} we have that
	$$\alpha_3 \alpha_1'+ \beta_3 \alpha_2'= 0.$$
	On the other hand  we have from \eqref{eq:11} that
	$$\alpha_2 \alpha_1'+ \beta_2 \alpha_2'= 0.$$
	Therefore both $(\alpha_2, \beta_2)$ and $(\alpha_3, \beta_3)$ are orthogonal to $(\alpha_1', \alpha_2')$ and hence are collinear, which contradicts to the hypothesis.

Therefore, in view of Step 1, we conclude that either the condition a) or the condition b) is satisfied with the polynomials $\tilde p$ and $\tilde q.$ Now, by taking into account the relation  \eqref{eq:10}, we readily get that at least one of conditions a) and b) of Theorem \ref{tm:1} holds. 
\end{proof}

Notice that the following result follows from the Step 1 of above proof:
\begin{theorem} \label{tm:4}
	Let $p= \ell_1 \ell_2, q = \ell_3\ell_4,$ where $\ell_i\in \dot\Pi_1^3,$ are linearly independent polynomials of degree two.  If the polynomial $r(x, y,z) = \alpha p(x, y,z)$ + $\beta q(x, y,z)$ is factorizable for two non-collinear vectors $(\alpha_1, \beta_1)$ and $(\alpha_2, \beta_2),$ with $\alpha_1\beta_1\alpha_2\beta_2\neq 0,$ then the polynomial $r(x,y,z)$ is factorizable for all pairs $(\alpha, \beta)\in \mathbb C^2.$

Moreover, at least one of the following conditions takes place:
	\begin{enumerate}
		\item[$(i)$] $p$ and $q$ have a common factor,
		\item[$(ii)$] The lines $\ell_1, \ell_2 , \ell_3, \ell_4$ are coincident.
	\end{enumerate}
\end{theorem}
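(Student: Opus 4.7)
The strategy is to reduce Theorem \ref{tm:4} directly to Step 1 of the proof of Theorem \ref{tm:2}; the hypotheses have been tailored precisely to land us in that situation. Indeed, since $p = \ell_1\ell_2$ and $q = \ell_3\ell_4$ are themselves factorizable, the two pairs $(1,0)$ and $(0,1)$ already give factorizable linear combinations (namely $p$ and $q$), and together with the two given pairs $(\alpha_1,\beta_1), (\alpha_2,\beta_2)$ we obtain four pairs of scalars, each yielding a factorizable $r$.

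First I would check that these four pairs are pairwise non-collinear: $(\alpha_1,\beta_1)$ and $(\alpha_2,\beta_2)$ are non-collinear by hypothesis, while the condition $\alpha_1\beta_1\alpha_2\beta_2 \neq 0$ ensures that neither of them is a scalar multiple of $(1,0)$ or $(0,1)$. So relabeling $(\alpha_1,\beta_1)$ as $(\alpha_3,\beta_3)$ and $(\alpha_2,\beta_2)$ as $(\alpha_4,\beta_4)$ places us exactly in the setup of Step 1, where the analogue of \eqref{nez} holds automatically. Step 1 proves, under the assumption that condition a) of Theorem \ref{tm:1} (common factor) fails, that $\mathrm{rank}\{\ell_1,\ell_2,\ell_3,\ell_4\}=2$, i.e.\ the four lines are concurrent; this yields the dichotomy between (i) and (ii) in the statement.

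For the first assertion of the theorem, observe that whichever of (i) or (ii) holds, we are in case a) or b) of Theorem \ref{tm:1}; by the easy ``if'' direction of that theorem (established right after its statement) the polynomial $\alpha p+\beta q$ is then factorizable for every $(\alpha,\beta) \in \mathbb{C}^2$. The only genuine content of the proof therefore lies in Step 1 of Theorem \ref{tm:2}, which has already been carried out; my task is essentially bookkeeping. The main obstacle, if any, is purely notational: making sure the correspondence between the $(\alpha_i,\beta_i)$ of Theorem \ref{tm:4} and those of Step 1 is stated cleanly so that no calculation needs to be repeated.
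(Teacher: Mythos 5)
Your proposal is correct and follows exactly the route the paper intends: the paper gives no separate proof of this theorem, merely remarking that it "follows from Step 1" of the proof of Theorem \ref{tm:2}, and your bookkeeping (adding the pairs $(1,0)$ and $(0,1)$, using $\alpha_1\beta_1\alpha_2\beta_2\neq 0$ to get four pairwise non-collinear vectors, and invoking the easy direction of Theorem \ref{tm:1} for the final factorizability claim) is precisely the intended reduction. Note only that what you call ``concurrent'' is what the paper's statement calls ``coincident,'' consistent with the paper's usage elsewhere.
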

\subsection {A counterexample\label{ce}}

Consider the following two polynomials:
$$p(x,y,z)=x(x+z),\quad  q(x,y,z)=y(2x+y+z).$$

Let us show that for the three non-collinear vectors $(1,0); \ (0,1);$ and $(1,1)$ the respective linear combinations are factorizable, while all other linear combinations are irreducible. Indeed, the case of first two linear combinations $(1,0)$ and $(0,1);$ is obvious.
Consider the third one: $(1,1).$ We have that

$$ p(x,y,z)+ q(x,y,z)=x(x+z)+y(2x+y+z)=x^2+xz+2xy+y^2+yz=$$
$$(x+y)^2+z(x+y)=(x+y)(x+y+z).$$

Now let us show that there is a forth linear combination which is not factorizable. Of course this, in view of Theorem \ref{tm:2}, will show that all linear combinations except the first three are irreducible.

Thus let us show that the linear combination corresponding to the vector $(2,1),$ i.e.,
   $$ 2p(x,y,z)+ q(x,y,z)=2x(x+z)+y(2x+y+z)$$
is irreducible. Assume, by way of contradiction that
$$ 2x^2+2xy+2xz+yz+y^2=A(x+B_1y+C_1z) (x+B_2y+C_2z).$$
Since the coefficients of $x^2$ and $z^2$ in the right hand side equal to $2$ and $0,$ respectively, we obtain that $A$=2 and $C_1C_2=0.$ Without loss of generality assume that $C_1=0.$ Thus we have that
$$ 2x^2+2xy+2xz+yz+y^2=2(x+B_1y) (x+B_2y+C_2z).$$
Now, by comparing the coefficients of $xy, xz, yz$ and $y^2,$ we obtain
$$B_1+B_2=1,\quad C_2=1,\quad 2B_1C_2=1,\quad 2B_1B_2=1.$$
Therefore we have from first three equalities: $B_1=B_2=0.5,$ which contradicts to the fourth equality: $2\cdot 0.5\cdot 0.5 =0.5\neq 1.$

\section{The results for bivariate polynomials}
	
	Now from the obtained results for trivariate homogeneous polynomials we are going to derive the analog results for bivariate polynomials:
\begin{theorem} \label{tm:20}	
	Let $p(x, y)$ and $q(x, y)$ be linearly independent polynomials of degree two. Suppose there exist four noncollinear vectors $(\alpha_i, \beta_i) \in \mathbb C^2,\ i = 1,\dots, 4,$  such that the polynomials
	$r_i(x, y) = \alpha_i p(x, y) + \beta_i q(x, y)$
	can be factorized.

Then we have that at least one of the following conditions takes place
\begin{description}
		\item[$a')$] $p$ and $q$ have a common factor of first degree:

$p=\ell \ell_1, q=\ell \ell_2$, where $\ell,\ell_1,\ell_2\in \dot\Pi_1^2$
		\item[$b')$] $p(x, y) = f(s), \ q(x, y) = g(s),$\\	
		 where $f$ and $g$ are univariate polynomials of degree two: $f,g\in\dot\Pi_2^1$ and \\
		$s = a_1x + b_1y,\ a_1^2+b_1^2\neq 0$,
		\item [$c')$] $p(x, y) = f(s, t), \ q(x, y) = g(s, t),$\\	
		 where $f$ and $g$ are bivariate homogeneous polynomials of degree two: $f,g\in\dot\Pi_2^2$ and $s = a_1x + b_1y + c_1,\
		 t = a_2x + b_2y + c_2.$
\end{description}
Moreover,  the polynomial
	$r(x, y) = \alpha p(x, y) + \beta q(x, y)$
	is factorizable for all $\alpha, \beta\in \mathbb C.$
\end{theorem}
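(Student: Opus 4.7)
The plan is to reduce to the trivariate theorems via the associated-polynomial device of \eqref{aaa}. Let $\bar p,\bar q\in\dot\Pi_2^3$ be the associates of $p,q\in\Pi_2^2$. Since $p\mapsto\bar p$ is $\mathbb C$-linear, $\overline{\alpha_i p+\beta_i q}=\alpha_i\bar p+\beta_i\bar q$, and by the equivalence recorded right after \eqref{aaa} each $r_i$ is factorizable if and only if its associate is; also linear independence of $\bar p,\bar q$ follows from that of $p,q$. Hence the hypotheses of Theorem \ref{tm:2} hold for $\bar p,\bar q$ with the same four noncollinear pairs $(\alpha_i,\beta_i)$, so one of the alternatives (a) or (b) of Theorem \ref{tm:1} must hold for $\bar p,\bar q$.

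I would then translate each case back by setting $z=1$. In case (a), $\bar p=\ell\,\ell_1$ and $\bar q=\ell\,\ell_2$ with $\ell,\ell_1,\ell_2\in\dot\Pi_1^3$ and $\ell=ax+by+cz$. If $(a,b)\neq(0,0)$, then $\ell(x,y,1)=ax+by+c$ is a bivariate linear factor common to $p$ and $q$, giving (a'). The remaining possibility $\ell=cz$ would force $\bar p$ and $\bar q$ to vanish identically on $z=0$, so both $p$ and $q$ would have degree at most $1$, contrary to the degree-two hypothesis. In case (b), $\bar p=f(s,t)$, $\bar q=g(s,t)$ with $s=a_1x+b_1y+c_1z$, $t=a_2x+b_2y+c_2z$; dehomogenising gives $p=f(\sigma,\tau)$, $q=g(\sigma,\tau)$ with affine forms $\sigma=a_1x+b_1y+c_1$ and $\tau=a_2x+b_2y+c_2$. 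I split on the rank of $M=\bigl(\begin{smallmatrix}a_1&b_1\\a_2&b_2\end{smallmatrix}\bigr)$: rank $2$ yields (c') directly; rank $1$ forces $\tau=\lambda\sigma+\mu$ for scalars $\lambda,\mu$, so $p,q$ become univariate quadratics in $\sigma$, and absorbing the constant $c_1$ into these quadratics yields (b') with $s=a_1x+b_1y$; rank $0$ would make $p,q$ constants and is excluded.

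The ``moreover'' part is a direct verification in each of the three output cases: in (a') one has $r=\ell(\alpha\ell_1+\beta\ell_2)$; in (b') $r=\alpha f(s)+\beta g(s)$ is a univariate quadratic in the single linear form $s$ and splits over $\mathbb C$; in (c') $r$ is a homogeneous quadratic in the forms $s,t$ and factors via Theorem \ref{hom_n}, substituting back yielding two affine factors in $x,y$. The main difficulty is essentially bookkeeping: tracking precisely when dehomogenising at $z=1$ causes a trivariate linear form to collapse to a constant, so that rank drops in $M$ are correctly matched to the transition (c')~$\to$~(b') and the degenerate corners $\ell=cz$ and $\operatorname{rank}M=0$ are seen to be ruled out by the degree-two hypothesis on $p,q$.
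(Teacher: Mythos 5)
Your proposal is correct and follows essentially the same route as the paper: homogenize to $\bar p,\bar q$, invoke Theorem \ref{tm:2} to land in case (a) or (b) of Theorem \ref{tm:1}, and dehomogenize at $z=1$ to obtain $a')$, $b')$ or $c')$. Your write-up is in fact slightly more careful than the paper's at the degenerate corners (ruling out $\ell=cz$ via the degree-two hypothesis, and sorting case (b) by the rank of the $x,y$-coefficient matrix rather than by the vanishing of $a_i^2+b_i^2$, which over $\mathbb C$ does not even force $a_i=b_i=0$).
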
	
\begin{proof}
	Let  $\bar{p}$, $\bar{q}$ be the homogeneous polynomials associated with $p$ and $q.$ Then we have that
	$$p(x, y) = \bar p(x, y, 1),\quad q(x, y) = \bar q(x, y, 1).$$
We also have that $\bar r(x,y,z) =\alpha \bar p(x,y,z)+\beta\bar q(x,y,z)$ is associated with  $r(x, y) = \alpha p(x, y) + \beta q(x, y).$

Next notice that the hypothesis of Theorem \ref{tm:2} hold with the homogeneous polynomials $\bar{p}$, $\bar{q}.$ Thus we get that for any numbers $\alpha$ and $\beta$  the polynomial $\bar r(x,y,z)$ is factorizable and hence $r(x,y)=\bar r(x,y,1)$ is factorizable. It  remains to prove that at least one of conditions $a'),b'),c')$ holds.

Suppose first that the condition a) of Theorem \ref{tm:1} holds, i.e.,
$$\bar p= (ax+by+cz)(a_1x+b_1y+c_1z),\quad \bar q= (ax+by+cz)(a_2x+b_2y+c_2z).$$
Then we have
$$p(x,y)=\bar p(x,y,1)= (ax+by+c)(a_1x+b_1y+c_1),\quad \bar q= (ax+by+c)(a_2x+b_2y+c_2).$$
Thus condition $a')$ holds.

Now suppose that the condition b) of Theorem \ref{tm:1} holds, i.e.,
\item[b)] $\bar p(x, y, z) = f(\bar s, \bar t), \ \bar q(x, y, z) = g(\bar s, \bar t),$\\	
		 where $f$ and $g$ are bivariate homogeneous polynomials: $f,g\in\dot\Pi_2^2$ and \\
		\begin{equation*} \label{st2} \bar s = a_1x + b_1y + c_1z,\
		 \bar t = a_2x + b_2y + c_2z.
\end{equation*}

Thus we get

$p(x, y) =\bar p(x,y,1)= f(s, t), \ q(x, y)=\bar q(x,y,1) = g(s, t),$\\	
		 where $f$ and $g$ are bivariate homogeneous polynomials: $f,g\in\dot\Pi_2^2$ and \\
		\begin{equation*} s(x,y) =\bar s(x,y,1)= a_1x + b_1y + c_1,\
		 t(x,y) =\bar t(x,y,1)= a_2x + b_2y + c_2.
\end{equation*}
Thus the condition $b')$ holds if $a_1^2+b_1^2=0$ or $a_2^2+b_2^2=0.$ Otherwise
 the condition $c')$ holds.
\end{proof}	
\begin{theorem} \label{tm:40}
	Let $p= \ell_1 \ell_2, q = \ell_3\ell_4,$ where $\ell_i\in \dot\Pi_1^2,$ are linearly independent polynomials of degree two.  If the polynomial $r(x, y) = \alpha p(x, y)$ + $\beta q(x, y)$ is factorizable for two non-collinear vectors $(\alpha_1, \beta_1)$ and $(\alpha_2, \beta_2),$ with $\alpha_1\beta_1\alpha_2\beta_2\neq 0,$ then the polynomial $r(x,y)$ is factorizable for all pairs $(\alpha, \beta)\in\mathbb C^2.$

Moreover, at least one of the following conditions takes place:
	\begin{enumerate}
		\item[$(i')$] $p$ and $q$ have a common factor,
		\item[$(ii')$]The lines $\ell_1, \ell_2 , \ell_3, \ell_4$ are parallel,
\item[$(iii')$]   The lines $\ell_1, \ell_2 , \ell_3, \ell_4$ are coincident.
	\end{enumerate}
\end{theorem}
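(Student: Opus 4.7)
The plan is to reduce Theorem \ref{tm:40} to its homogeneous counterpart, Theorem \ref{tm:4}, via the associate polynomial construction introduced in the preliminaries. I will form $\bar p, \bar q \in \dot\Pi_2^3$; since each $\ell_i$ is bivariate of degree one, its associate $\bar\ell_i$ is trivariate homogeneous of degree one, and the multiplicativity of the associate operation yields $\bar p = \bar\ell_1\bar\ell_2$ and $\bar q = \bar\ell_3\bar\ell_4$. The identity $\overline{\alpha p + \beta q} = \alpha\bar p + \beta\bar q$, together with the equivalence between the factorizations of a bivariate polynomial and those of its associate, guarantees that $\bar p$ and $\bar q$ are linearly independent and that the noncollinear vectors $(\alpha_1,\beta_1)$, $(\alpha_2,\beta_2)$ supply two factorizable linear combinations of $\bar p, \bar q$ --- exactly the hypothesis of Theorem \ref{tm:4}.

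Applying Theorem \ref{tm:4} produces two outputs. First, every $\bar r = \alpha\bar p + \beta\bar q$ is factorizable in $\mathbb C[x,y,z]$, so $r(x,y) = \bar r(x,y,1)$ is factorizable for all $(\alpha,\beta) \in \mathbb C^2$, giving the first assertion. Second, either (I) $\bar p$ and $\bar q$ share a common linear factor, or (II) the four projective lines $\bar\ell_1, \bar\ell_2, \bar\ell_3, \bar\ell_4$ pass through a common point $(x_0:y_0:z_0)$. In case (I), if the common factor is $\bar\ell = ax + by + cz$ with $(a,b) \neq (0,0)$, dehomogenization yields $ax + by + c$ as an affine common factor of $p$ and $q$, which is alternative $(i')$. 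The subcase $a = b = 0$ is excluded easily: $z \mid \bar\ell_1\bar\ell_2$ would force some $\bar\ell_i$ to be a scalar multiple of $z$, making the corresponding $\ell_i$ a nonzero constant and contradicting $\deg \ell_i = 1$.

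The real content is in case (II), and this is where the main (though modest) obstacle lies: one must read off the geometric dichotomy between a finite and an infinite common projective point. If $z_0 \neq 0$, rescale so that $z_0 = 1$; then $(x_0, y_0)$ is an affine point lying on every $\ell_i$, so the four affine lines are concurrent, which is alternative $(iii')$. If $z_0 = 0$, the relation $a_i x_0 + b_i y_0 = 0$ for every $i$ forces all normal vectors $(a_i, b_i)$ to be proportional to a fixed nonzero vector, so the $\ell_i$ share a common direction and are therefore parallel, which is alternative $(ii')$. Once this projective-to-affine translation is in hand, the three alternatives $(i'), (ii'), (iii')$ exhaust the possibilities and the proof is complete.
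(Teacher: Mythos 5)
Your proposal is correct and follows essentially the same route as the paper: homogenize to $\bar p,\bar q$, invoke Theorem \ref{tm:4}, and translate its two alternatives back by distinguishing whether the common projective point of the four lines lies at infinity ($z_0=0$, giving parallelism $(ii')$) or is finite (giving concurrency $(iii')$). Your extra remark ruling out $z$ as a possible common factor in case (I) is a small point the paper leaves implicit, but otherwise the arguments coincide.
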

\begin{proof}
	We get, as in the previous proof, that  the hypothesis of Theorem \ref{tm:4} hold with the homogeneous trivariate polynomials $\bar{p}$, $\bar{q},$ which are associated with $p$ and $q.$ Then, as above we conclude that for any numbers $\alpha$ and $\beta$  the polynomial $r(x,y)$ is factorizable. It  remains to prove that at least one of conditions $(i'),(ii'),(iii')$ holds.

In the proof of Theorem \ref{tm:20} we verified that if $\bar p$ and $\bar q$ have a common factor then the same is true for the polynomials $p$ and $q.$

Thus the condition $(i')$ holds if $(i)$ holds.

Now suppose that the condition $(ii)$ of Theorem \ref{tm:1} holds, i.e.,
the lines $\bar\ell_1, \bar\ell_2 , \bar\ell_3, \bar\ell_4$ are coincident, where $\bar \ell_i$ is the homogeneous associate of $\ell_i.$

Suppose that the line $\ell_i$ is given by $a_ix + b_iy + c_i=0,\ i=1,\ldots,4.$

Suppose also that the four lines are coincident at $(x_0,y_0,z_0).$

If this point is at infinity, i.e., $z_0=0,$ then we have that $x_0^2+y_0^2\neq o$ and
 $a_ix_0 + b_iy_0=0,\ i=1,\ldots,4.$ This means that the vector $(x_0,y_0)$ is a normal vector for all lines $\ell_i,\ i=1,\dots,4.$ Therefore these lines are parallel and the condition $(ii')$ holds.

Next, suppose that the point of coincidence $(x_0,y_0,z_0)$ is a finite one, i.e., $z_0\neq 0.$ Then, in view of \eqref{aaa}, we readily get that the lines $\ell_i,\ i=1,\dots,4,$ are coincident
at $\left(\frac{x_0}{z_0},\frac{y_0}{z_0}\right).$ Therefore the condition $(iii')$ holds.
\end{proof}	
\subsection {A counterexample}

At the end let us modify the counterexample from Subsection \ref{ce} for the case of bivariate polynomials.

Consider the following two polynomials:
$$p(x,y,z)=x(x+1),\quad  q(x,y,z)=y(2x+y+1).$$

In the same way as in the homogeneous case we can show that for the three non-collinear vectors $(1,0); \ (0,1);$ and $(1,1)$ the respective linear combinations are factorizable, while the linear combination for $(2,1)$ is irreducible. Thus we conclude, in view of Theorem \ref{tm:20}, that all linear combinations except the first three are irreducible.

\vspace{3mm}

%%%%%%%%%%%%%%%%%%%%%%%%%%%%%%%%%%%%%%%%%%%%%%%%%%%%%%%%%%%%%%%%%%%%%%%%
%%%%%%%%%%%%%%%%%%%%%%%%%%%%%%%%%%%%%%%%%%%%%%%%%%%%%%%%%%%%%%%%%%%%%%%%

\noindent \emph{Anna Gharibyan} \vspace{2mm}

\noindent{Department of Informatics and Applied Mathematics\\
Yerevan State University\\
A. Manukyan St. 1\\
0025 Yerevan, Armenia}

\vspace{1mm}
\noindent E-mail: $<an.gharibyan@gmail.com>$

\end{document}